\newtheorem{theorem}{Theorem}
\newtheorem{definition}{Definition}
\newtheorem{corollary}{Corollary}
\newtheorem{example}{Example}
\numberwithin{theorem}{section}
\numberwithin{definition}{section}
\numberwithin{lemma}{section}
\numberwithin{corollary}{section}
\numberwithin{equation}{section}
\numberwithin{proposition}{section}
\numberwithin{example}{section}
\numberwithin{remark}{section}
\numberwithin{figure}{section}
\def\es{\varnothing}
\def\SB{\subseteq}
\def\aa{\alpha}
\def\bb{\beta}
\def\gg{\gamma}
\def\ee{\epsilon}
\def\b0{\boldsymbol 0}
\def\Ree{\mathbb R}
\def\imp{\Rightarrow}
\def\EQ{\Longleftrightarrow}
\def\AAA{{\cal A}}
\def\FFF{{\cal F}}
\def\DDD{{\cal D}}
\def\GGG{{\cal G}}
\def\HHH{{\cal H}}
\def\OOO{{\cal O}}
\def\RRR{{\cal R}}
\def\XXX{{\cal X}}
\def\begeq{\begin{equation}}
\def\endeq{\end{equation}}
\def\roster{\begin{enumerate}}
\def\endroster{\end{enumerate}}
\begin{document}

\title{Discrete piecewise linear functions}
\author{Sergei~Ovchinnikov \\
Mathematics Department\\
San Francisco State University\\
San Francisco, CA 94132\\
sergei@sfsu.edu} 

\date{\today} 
\maketitle

\begin{abstract}\noindent
The concept of permutograph is introduced and properties of integral functions on permutographs are established. The central result characterizes the class of integral functions that are representable as lattice polynomials. This result is used to establish lattice polynomial representations of piecewise linear functions on $\Ree^d$ and continuous selectors on linear orders.
\end{abstract}

\section{Introduction} \label{intro}

We begin with a simple yet instructive example. Let $f$ be a piecewise linear function (PL-function) defined by

$$
f(x)=\begin{cases}
	g_1(x), &\text{for $x\le-1$},\\
	g_2(x), &\text{for $-1\le x\le 1$},\\
	g_3(x), &\text{for $x\ge 1$}.
\end{cases}
$$
where
$$
g_1(x)=x+2,\quad g_2(x)=-x,\quad g_3(x)=0.5x-1.5.
$$
The graph of this function is shown in Figure~\ref{PL-graph}. 

\begin{figure}[h]
\centerline{\includegraphics{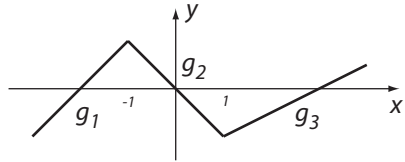}}
\caption{\small Graph of a PL-function.} \label{PL-graph}
\end{figure}

\noindent
It is easy to verify that
$$
f=g_1\wedge(g_2\vee g_3)=(g_1\wedge g_2)\vee(g_1\wedge g_3).
$$
We use the notations
$$
a\wedge b=\min\{a,b\}\quad\text{and}\quad a\vee b=\max\{a,b\}.
$$
throughout the paper. Thus, the function $f$ can be represented as a lattice polynomial in variables $g_1$, $g_2$, and $g_3$. This is true in general: any continuous PL-function $h$ on a convex domain in $\Ree^d$ is a lattice polynomial whose variables are linear `components' of $h$ (Theorem~\ref{PL theorem}). In various forms this result was independently established in~\cite{sB95},~\cite{sO02}, and~\cite{rW63} (however, see comments in Section~\ref{conclusion}, item 1).

The aim of the paper is to show that this result is essentially combinatorial. We introduce a class of functions on permutographs that we call `discrete piecewise linear functions', and show that these functions are representable as lattice polynomials. The discretization of the original problem is achieved by replacing the `continuity' and `linearity' properties of PL-functions by the `separation' and `linear ordering' properties of integral functions on permutographs.

Permutographs are isometric subgraphs of a weighted Cayley graph of the symmetric group; they are introduced in Section~\ref{permutographs}. In Section~\ref{functions on permutations}, we characterize lattice polynomials on permutographs in terms of the `separation property' and as `discrete PL-functions'. These characterizations are used in Section~\ref{PL-functions} to establish lattice polynomial representations of PL-functions on convex domains in $\Ree^d$.

In a totally different setting, the results of Section~\ref{functions on permutations} are used in Section~\ref{selectors} to obtain a polynomial representation for functions on linear orders. Topological properties of linear orders that are used in Section~\ref{selectors} are introduced in Appendix~\ref{appendix}. Some relevant topics are discussed in Section~\ref{conclusion}.

\section{Permutographs} \label{permutographs}

Let $X$ be a linearly ordered finite set of cardinality $n\ge 1$. We assume that $X$ is the set $\{1,\ldots,n\}$ ordered by the usual relation $<$. A {\em permutation} ({\em of order $n$}) is a bijection $\aa:X\to X$. We write permutations on the right, that is, $x\aa$ is the image of $x$ under $\aa$, compose them left to right (cf.~\cite{pC99,pC65}), and use the notation
$$
\aa=(x_1\cdots x_k\cdots x_n),
$$
where $x_k=k\aa$. For a given permutation $\aa=(x_1\cdots x_n)$, the elements of $X$ are linearly ordered by the relation $<_\aa$ defined by
$$
x_i<_\aa x_j\qquad\EQ\qquad i<j.
$$
In other words,
$$
x<_\aa y\qquad\EQ\qquad x\aa^{-1}<y\aa^{-1}.
$$
We write $x\le_\aa y$ if $x<_\aa y$ or $x=y$. Symbols $>_\aa$ and $\ge_\aa$ stand for the respective inverse relations.

A pair $\{x,y\}$ of elements of $X$ is called an {\em inversion} for a pair of permutations $\{\aa,\bb\}$ if $x$ and $y$ appear in reverse order in $\aa$ and $\bb$. The {\em distance} $d(\aa,\bb)$ between permutations $\aa$ and $\bb$ is defined as the number of inversions for the pair $\{\aa,\bb\}$. This distance equals one half of the cardinality of the symmetric difference of the binary relations $<_\aa$ and $<_\bb$. We say that a permutation $\gg$ {\em lies between} permutations $\aa$ and $\bb$ if
$$
d(\aa,\gg)+d(\gg,\bb)=d(\aa,\bb).
$$
It is straightforward to see that $\gg$ lies between $\aa$ and $\bb$ if and only if
$$
(\text{$x<_\aa y$ and $x<_\bb y$})\quad\imp\quad x<_\gg y\quad\text{for all $x,y\in X$.}
$$

The set of all permutations of $X$ forms the {\em symmetric group $S_n$} with the operation of composition and the identity element $\ee=(1\cdots n)$.

A partition $\pi=(X_1,\ldots,X_m)$ of the ordered set $(X,<)$ into a family of nonempty subsets is said to be an {\em ordered partition} if
$$
(x\in X_i,\;y\in X_j,\;i<j)\quad\imp\quad x<y.
$$
The ordered partition $(\{1\},\ldots,\{n\})$ is said to be {\em trivial}. 

For a given nontrivial ordered partition $\pi=(X_1,\ldots,X_m)$, the permutation $\tau_\pi$ reverses the order of elements in every set $X_i$. For instance,
$$
\tau_\pi=(1\,432\,5\,6\,87)
$$
for the ordered partition $\pi=(\{1\},\{2,3,4\},\{5\},\{6\},\{7,8\})$. 
Two permutations $\aa$ and $\bb$ are {\em $\pi$-adjacent} if $\aa\bb^{-1}=\tau_\pi$ for a nontrivial ordered partition $\pi$. They are {\em adjacent} if they are $\pi$-adjacent for some $\pi$. 
The adjacency relation on $S_n$ is symmetric and irreflexive. It defines a Cayley graph on $S_n$ that we denote by $\Upsilon_n$. By definition, the {\em weight} of an edge $\aa\bb$ is the distance $d(\aa,\bb)$ between permutations $\aa$ and $\bb$. We call the weighted graph $(\Upsilon_n,d)$ the {\em big permutograph on $S_n$}. This graph is $k$-regular for $k=2^{n-1}-1$. By definition, a {\em permutograph on $S_n$} (cf.~\cite{gT82}) is an isometric weighted subgraph of the big permutograph.

\begin{figure}[h]
\centerline{\includegraphics{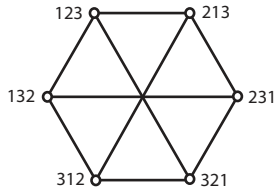}}
\caption{\small Big permutograph on $S_3$. The weights of edges are not shown.} \label{big3}
\end{figure}

\begin{example}
{\rm The graph $\Upsilon_3$ is shown in Figure~\ref{big3}. It is the complete bipartite graph $K_{3,3}$.
}
\end{example}

\begin{example} \label{permutohedron}
{\rm The graph of the permutohedron $\Pi_{n-1}$ is a permutograph on $S_n$. (See \cite{cB71,aB99,gT82,gZ95}; the term ``permutohedron'' was coined by Guilbaud and Rosenstiehl~\cite{gG63} in 1963.) This graph is a spanning graph of $\Upsilon_n$.
The edges of the big permutograph on $S_n$ link the opposite vertices of the faces of the permutohedron (see Figure~\ref{big4}).
}
\end{example}

\begin{figure}[h]
\centerline{\includegraphics{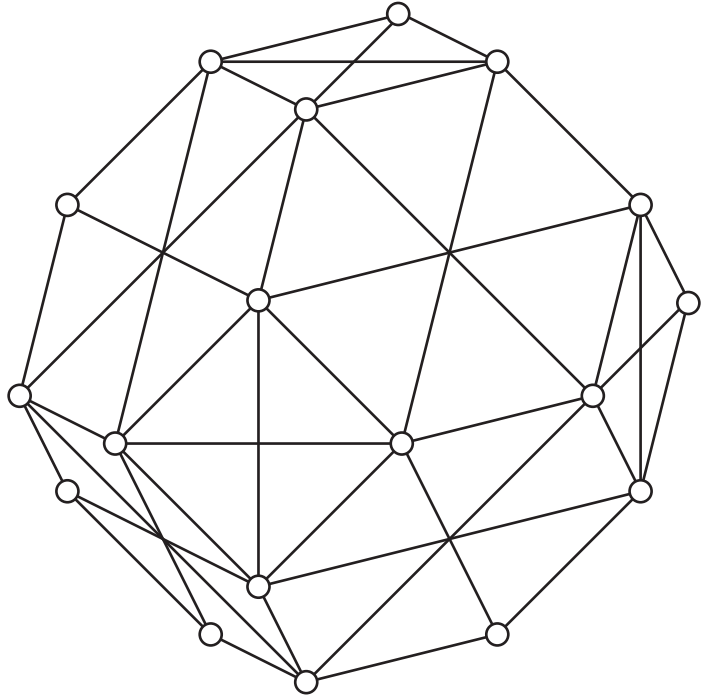}}
\caption{\small Big permutograph on $S_4$. The graph is drawn on the permutohedron $\Pi_3$. Only `visible' edges are shown. The weights of edges are not shown.} \label{big4}
\end{figure}

\section{Functions on permutations} \label{functions on permutations}

Let $S$ be a subset of $S_n$. We denote $\FFF(S)$ the set of all functions from $S$ to $X$. The ordering $<$ of the set $X$ induces a partial order on $\FFF(S)$:
$$
F<G\quad\EQ\quad F(\aa)<_\aa G(\aa)\quad\text{for all $F,G\in\FFF(S)$ and $\aa\in S$}.
$$
The poset $\FFF(S)$ is a complete distributive lattice with meet and join operations defined pointwise by
\begeq \label{lattice ops}
(F\wedge G)(\aa)=F(\aa)\wedge G(\aa)\quad\text{and}\quad (F\vee G)(\aa)=F(\aa)\vee G(\aa),
\endeq
respectively. In the right-hand sides of the equations in~\ref{lattice ops}, the meet and join operations are defined with respect to the linear ordering $\le_\aa$. We use this convention throughout the paper.

For a given $k\in X$ we define
\begeq \label{constant}
G_k(\aa)=k\quad\text{for all $\aa\in S$},
\endeq
the {\em constant function} on the set $S$. 

\begin{example} \label{statistic}
{\rm The $k$th {\em order statistic} $M_k$ on $S$ is defined by
$$
M_k(\aa)=x_k\quad\text{for $\aa=(x_1\cdots x_n)\in S$}.
$$
Let $\XXX_k$ be a family of subsets of $X$ defined by
$$
\XXX_k=\{Y\SB X: |X\setminus Y|=k-1\}.
$$
We have the following formula for the order statistic $M_k$ (cf.~\cite{sO96}):
\begeq \label{M_k}
M_k = \bigvee_{Y\in\XXX_k}\bigwedge_{j\in Y} G_j
\endeq
Indeed, it suffices to note that $x_1<_\aa\cdots<_\aa x_n$ for $\aa=(x_1\cdots x_n)$, so the maximum in~(\ref{M_k}) is attained at $Y=\{x_k,\ldots,x_n\}$.
}
\end{example}

The right-hand side of the equation in~(\ref{M_k}) is a lattice polynomial written in its disjunctive normal form. Since $\FFF(S)$ is a distributive lattice, any lattice polynomial in variables $G_j$'s can be written in the disjunctive normal form~\cite{gB79}. This fact motivates the following definition.

\begin{definition}
{\rm Let $\{K_i\}_{i\in I}$ be a family of subsets of the set $X=\{1,\ldots,n\}$. A {\em polynomial} on $S$ is a function $F:S\to X$ defined by
\begeq \label{poly}
F(\aa)=\bigvee_{i\in I}\bigwedge_{j\in K_i} G_j(\aa).
\endeq
Note that we may assume that the family $\{K_i\}_{i\in I}$ is an antichain in the lattice $2^X$ of all subsets of $X$.
}
\end{definition}

In this section we give two characterizations of polynomial functions on permutations.

\begin{definition}
{\rm Let $S$ be a nonempty subset of $S_n$. A function $F:S\to X$ satisfies the {\em separation property} ({\em S-property}) if, for any $\aa,\bb\in S$, there is $u\in X$ such that
$$
u\le_\aa F(\aa)\quad\text{and}\quad u\ge_\bb F(\bb).
$$
}
\end{definition}

\begin{theorem} \label{S=poly}
A function $F:S\to X$ satisfies the S-property if and only if it is a polynomial.
\end{theorem}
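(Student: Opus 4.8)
The plan is to prove both implications, the core being an explicit construction of the defining family $\{K_i\}$ directly from the values of $F$.

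First I would dispose of the easier direction, that a polynomial $F(\aa)=\bigvee_{i\in I}\bigwedge_{j\in K_i}G_j(\aa)$ satisfies the S-property. Since each $G_j$ is constant with value $j$, the inner meet $\bigwedge_{j\in K_i}G_j(\aa)$ is simply the $\le_\aa$-least element of $K_i$, and $F(\aa)$ is the $\le_\aa$-greatest among these least elements. Given $\aa,\bb\in S$, I would pick an index $i_1$ at which the join defining $F(\bb)$ is attained, so that $F(\bb)$ is the $\le_\bb$-least element of $K_{i_1}$, and set $u$ to be the $\le_\aa$-least element of $K_{i_1}$. Then $u\le_\aa F(\aa)$ because $F(\aa)$ dominates every term of its own join, while $u\ge_\bb F(\bb)$ because $u\in K_{i_1}$ and $F(\bb)$ is the $\le_\bb$-minimum of $K_{i_1}$. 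This $u$ witnesses the S-property.

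For the converse the key idea is to build one set per permutation. For each $\bb\in S$ define the up-set
$$
K_\bb=\{x\in X:\ x\ge_\bb F(\bb)\},
$$
and consider the candidate polynomial $F'(\aa)=\bigvee_{\bb\in S}\bigwedge_{j\in K_\bb}G_j(\aa)$, so that $F'(\aa)$ is the $\le_\aa$-greatest, over all $\bb\in S$, of the $\le_\aa$-least elements of the sets $K_\bb$. I would establish $F'=F$ in two steps. The diagonal term $\bb=\aa$ already reproduces $F$: the $\le_\aa$-least element of $K_\aa=\{x:x\ge_\aa F(\aa)\}$ is exactly $F(\aa)$, so $F'(\aa)\ge_\aa F(\aa)$.

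The reverse inequality $F'(\aa)\le_\aa F(\aa)$ is where the S-property does the real work, and is the step I expect to be the main obstacle. I must show that every off-diagonal term stays below $F(\aa)$, i.e. that the $\le_\aa$-least element of $K_\bb$ is $\le_\aa F(\aa)$ for each $\bb\in S$. Applying the S-property to the ordered pair $(\aa,\bb)$ produces some $u$ with $u\le_\aa F(\aa)$ and $u\ge_\bb F(\bb)$; the second inequality says precisely that $u\in K_\bb$, so the $\le_\aa$-least element of $K_\bb$ is $\le_\aa u\le_\aa F(\aa)$. Combining the two steps gives $F'=F$, exhibiting $F$ as a polynomial. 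The only points demanding care are that all meets and joins are taken with respect to the permutation-dependent orders $\le_\aa$, and that the S-property is invoked with its two permutations in the correct roles.
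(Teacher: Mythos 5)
Your proposal is correct and follows essentially the same route as the paper: the construction $K_\bb=\{x : x\ge_\bb F(\bb)\}$ with the diagonal term giving equality and the S-property bounding the off-diagonal terms is exactly the paper's necessity argument. The only cosmetic difference is that you prove the polynomial-implies-S-property direction directly (exhibiting $u$ as the $\le_\aa$-least element of the set where the join at $\bb$ is attained), whereas the paper argues the contrapositive by contradiction; the underlying computation is the same.
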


\begin{proof}
(Necessity.) Suppose that $F$ satisfies the S-property. For $\gg\in S$, let $K_\gg=\{v\in X\mid v\ge_\gg F(\gg)\}$. By the S-property, there is $u\in X$ such that $u\le_\aa F(\aa)$ and $u\ge_\gg F(\gg)$, so $u\in K_\gg$. Since $u\le_\aa F(\aa)$, we have
$$
\bigwedge_{j\in K_\gg} G_j(\aa)\le_\aa F(\aa)\qquad\text{for every $\gg\in S$.}
$$
Clearly,
$$
\bigwedge_{j\in K_\aa} G_j(\aa) = F(\aa).
$$
Thus,
$$
\bigvee_{\gg\in S}\bigwedge_{j\in K_\gg} G_j(\aa)=F(\aa),
$$
that is, $F$ is a polynomial.

(Sufficiency.) Suppose that $F$ is a polynomial in the form~(\ref{poly}). Let $\aa,\bb\in S$ and suppose that $u<_\bb F(\bb)$ for any $u\le_\aa F(\aa)$. By~(\ref{poly}), for every $i$ there is $j_i\in K_i$ such that $j_i\le_\aa F(\aa)$ (recall that $G_j$'s are constant functions). By our assumption, $j_i<_\bb F(\bb)$. Since $j_i\in K_i$, we have 
$$
\bigwedge_{j\in K_i} G_j(\bb)<_\bb F(\bb).
$$
Therefore,
$$
\bigvee_{i\in J}\bigwedge_{j\in K_i} G_j(\bb)<_\bb F(\bb),
$$
contradicting~(\ref{poly}). It follows that $F$ satisfies the S-property.
\end{proof}


\begin{definition}
{\rm A function $F:S\to X$ is said to be a {\em discrete piecewise linear function} (a {\em DPL-function}) if, for any two $\pi$-adjacent permutations $\aa,\bb\in S$ with $\pi=(X_1,\ldots,X_m)$, we have
\begeq \label{DPL}
F(\aa)\in X_i\aa\quad\text{and}\quad F(\bb)\in X_i\bb\quad\text{for some $1\le i\le m$}.
\endeq
See Figure~\ref{DPL-function}.
}
\end{definition}

\begin{figure}[h]
\centerline{\includegraphics{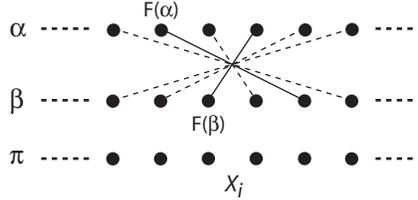}}
\caption{\small Values of a DPL-function on two adjacent permutations.} \label{DPL-function}
\end{figure}

\begin{theorem} \label{DPL=S}
Let $S$ be the vertex set of a permutograph on $S_n$. A function $F$ on $S$ is a DPL-function if and only if it satisfies the S-property.
\end{theorem}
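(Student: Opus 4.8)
The plan is to prove the two implications separately. The implication ``S-property $\imp$ DPL'' is the more elementary one and in fact holds for an arbitrary subset $S\SB S_n$; only the reverse implication will use the defining isometry of a permutograph, via an induction on the distance $d(\aa,\bb)$. Throughout I will use the following structural observation about $\pi$-adjacency: if $\aa$ and $\bb$ are $\pi$-adjacent with $\pi=(X_1,\dots,X_m)$, then the element sets $B_i=X_i\aa=X_i\bb$ coincide, these blocks appear as consecutive intervals in the same order $B_1,\dots,B_m$ under both $\le_\aa$ and $\le_\bb$, and within each block the two orders are exactly reversed. This is because $\aa\bb^{-1}=\tau_\pi$ is an involution reversing positions inside each $X_i$, so a pair of elements is inverted between $\aa$ and $\bb$ precisely when both elements lie in a common block.

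For ``S-property $\imp$ DPL'', suppose $\aa,\bb$ are $\pi$-adjacent and write $F(\aa)\in B_p$, $F(\bb)\in B_q$; the goal is $p=q$. If $p<q$, apply the S-property to the pair $(\aa,\bb)$ to get $u$ with $u\le_\aa F(\aa)$ and $u\ge_\bb F(\bb)$; the first inequality forces $u$ into a block of index $\le p$ and the second forces it into a block of index $\ge q$, which is impossible. If $p>q$, apply the S-property to the pair $(\bb,\aa)$ and argue symmetrically. Hence $p=q$ and $F$ is a DPL-function.

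For the converse I first isolate a single-edge lemma: if $\aa,\bb$ are $\pi$-adjacent and $F$ satisfies the DPL-condition on this edge, then the S-property holds for $(\aa,\bb)$. Indeed $F(\aa)$ and $F(\bb)$ lie in a common block $B$, and since $\le_\aa$ and $\le_\bb$ are mutually reversed on $B$, either $F(\aa)\ge_\bb F(\bb)$, in which case $u=F(\aa)$ works, or $F(\aa)<_\bb F(\bb)$, which by the reversal means $F(\bb)\le_\aa F(\aa)$ and so $u=F(\bb)$ works. In either case the separator can be taken to be one of the two values $F(\aa),F(\bb)$.

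Now ``DPL $\imp$ S-property'' follows by induction on $d(\aa,\bb)$, the case $d(\aa,\bb)=0$ being trivial. For the step, the isometry of the permutograph lets me pick a neighbor $\gg\in S$ of $\aa$ on a shortest path to $\bb$, so that $\aa,\gg$ are $\pi$-adjacent, $\gg$ lies between $\aa$ and $\bb$, and $d(\gg,\bb)<d(\aa,\bb)$. The single-edge lemma gives $u_1$ with $u_1\le_\aa F(\aa)$ and $u_1\ge_\gg F(\gg)$, while the inductive hypothesis applied to $(\gg,\bb)$ gives $u_2$ with $u_2\le_\gg F(\gg)$ and $u_2\ge_\bb F(\bb)$. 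If $u_2\le_\aa F(\aa)$ then $u_2$ separates $(\aa,\bb)$, and if $u_1\ge_\bb F(\bb)$ then $u_1$ does; the crux is to exclude the remaining case. Assuming $u_2>_\aa F(\aa)$ and $u_1<_\bb F(\bb)$ (with $u_1\ne u_2$) yields $u_1<_\aa u_2$ and $u_1<_\bb u_2$, yet $u_1>_\gg u_2$ from $u_2\le_\gg F(\gg)\le_\gg u_1$. This contradicts the betweenness characterization of $\gg$, namely that $u_1<_\aa u_2$ and $u_1<_\bb u_2$ force $u_1<_\gg u_2$. I expect this final combination step — turning the two one-sided separators into a single separator for $(\aa,\bb)$ by invoking betweenness — to be the main obstacle, as it is the only place where the geometry of geodesics, rather than mere adjacency, is essential.
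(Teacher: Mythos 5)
Your proof is correct. The direction ``S-property $\Rightarrow$ DPL'' is essentially the paper's argument, and your converse follows the same skeleton as the paper's — induction along a geodesic, a base case settled by the block structure of a $\pi$-adjacent pair, and betweenness as the key geometric input — but the inductive step is organized differently. The paper applies the induction hypothesis only to $(\gg,\bb)$, obtaining a single separator $u$, and then shows (after reducing to the case $F(\aa)<_\bb F(\bb)$) that this same $u$ already satisfies $u\le_\aa F(\aa)$; this requires re-engaging the block structure of the $\pi$-adjacency of $\aa$ and $\gg$ inside the inductive step, via a case analysis on whether $u$ falls below or inside the block $X_i$. You instead isolate a gluing lemma: given a separator $u_1$ for $(\aa,\gg)$ and a separator $u_2$ for $(\gg,\bb)$, one of the two must separate $(\aa,\bb)$, since otherwise $u_1<_\aa u_2$ and $u_1<_\bb u_2$ while $u_1>_\gg u_2$, contradicting the betweenness characterization of $\gg$ (your observation that $u_1\ne u_2$ in the bad case, so that the $\gg$-inequality is strict, is the one delicate point and you handle it). Your gluing step uses nothing about adjacency — only that $\gg$ lies between $\aa$ and $\bb$ — so all of the block analysis is confined to the single-edge lemma. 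This buys a cleaner modular structure (the S-property is in effect transitive along geodesics), at the cost of carrying two candidate separators rather than propagating one explicit separator down the path as the paper does.
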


\begin{proof}
(Necessity.) Let $F$ be a DPL-function on $S$. We need to show that for given $\aa,\bb\in S$ there is $u\in X$ such that
$$
u\le_\aa F(\aa)\quad\text{and}\quad u\ge_\bb F(\bb).
$$
If $F(\bb)\le_\bb F(\aa)$, we may choose $u=F(\aa)$. Thus, in what follows, we assume that
\begeq \label{assume}
F(\aa)<_\bb F(\bb).
\endeq

The proof is by induction on the length $k$ of a shortest $\aa\bb$-path in $S$. 

For $k=1$, the permutations $\aa$ and $\bb$ are adjacent and the result follows immediately from the definition of a DPL-function: just let $u$ be the maximum element in $X_i\bb$ (see~(\ref{DPL}) and recall that elements of $X_i\aa$ and $X_i\bb$ are in reverse order).

For the inductive step, suppose that 
$$
\aa=\aa_0,\gg=\aa_1,\ldots,\aa_k=\bb
$$
is a shortest $\aa\bb$-path of length $k>1$ in $S$. By the induction hypothesis, there is $u\in X$ such that
$$
u\ge_\bb F(\bb)\quad\text{and}\quad u\le_\gg F(\gg).
$$
Since $\aa=(x_1,\ldots,x_n)$ and $\gg=(y_1,\ldots,y_n)$ are $\pi$-adjacent for some ordered partition $\pi=(X_1,\ldots,X_m)$, we have $F(\aa)\in X_i\aa$ and $F(\gg)\in X_i\gg$ for some $1\le i\le m$, that is,
$$
x_p\le_\aa F(\aa)\le_\aa x_q\quad\text{and}\quad y_p\le_\gg F(\gg)\le_\gg y_q,
$$
where $p$ and $q$ are the minimum and maximum elements of $X_i$, respectively.

If $u<_\gg y_p$, then $u<_\aa x_p$, because $\pi$ is an ordered partition. Therefore, $u<_\aa F(\aa)$, since $x_p\le_\aa F(\aa)$.

Otherwise, $u\in X_i\gg$, implying $u\in X_i\aa$, since $\aa$ and $\gg$ are $\pi$-adjacent. Since $\gg$ lies between $\aa$ and $\bb$, and $F(\aa)\in X_i\gg$, any element of $X_i\aa$ which is greater then $F(\aa)$ in the linear ordering $(X_i,<_\aa)$ must be less than $F(\aa)$ in the linear ordering $(X,<_\bb)$. Hence, if $u>_\aa F(\aa)$, then we must have $F(\bb)\le_\bb u<_\bb F(\aa)$, in contradiction with our assumption in~(\ref{assume}). It follows that $u\le_\aa F(\aa)$.

(Sufficiency.) We assume that $F$ satisfies the S-property. Let $\aa$ and $\bb$ be two $\pi$-adjacent permutations in $S$ with $\pi=(X_1,\ldots,X_m)$ and suppose that $F(\aa)\in X_i\aa$ and $F(\bb)\in X_j\bb$ with $i<j$. Clearly, $u\ge_\bb F(\bb)$ implies $u>_\aa F(\aa)$, contradicting the S-property. Hence, $i\ge j$. By symmetry, $j=i$. Therefore $F$ is a DPL-function.
\end{proof}

The following theorem summarizes the results of Theorems~\ref{S=poly} and~\ref{DPL=S}.

\begin{theorem} \label{DPL representation}
Let $S$ be a permutograph on $S_n$ and $F$ be a function on $S$ with values in $X$. The following statements are equivalent:
\roster
	\item[{\rm(i)}] $F$ is a discrete piecewise linear function.
	\item[{\rm(ii)}] $F$ satisfies the separation property.
	\item[{\rm(iii)}] $F$ is a polynomial.
\endroster
\end{theorem}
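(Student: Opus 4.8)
The plan is to observe that this three-way equivalence is nothing more than the concatenation of the two biconditionals already proved, so that no new mathematical work is required beyond checking that the hypotheses of both earlier theorems are simultaneously in force. First I would note that the hypothesis of the present statement---that $S$ is a permutograph on $S_n$---is exactly what Theorem~\ref{DPL=S} requires, and it trivially implies the weaker hypothesis (namely $S$ nonempty) needed for Theorem~\ref{S=poly}. Thus both earlier results may be applied to the same $F$ and $S$.

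Then I would chain them. Theorem~\ref{DPL=S} supplies the equivalence (i)$\Leftrightarrow$(ii), asserting that $F$ is a DPL-function precisely when it has the S-property; and Theorem~\ref{S=poly} supplies the equivalence (ii)$\Leftrightarrow$(iii), asserting that $F$ has the S-property precisely when it is a polynomial. Composing the two, one obtains (i)$\Leftrightarrow$(ii)$\Leftrightarrow$(iii), which is exactly the asserted equivalence of all three conditions. The argument can be written in one or two sentences.

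There is essentially no obstacle here: the statement is a bookkeeping summary, and the entire substantive content has already been discharged in the proofs of the two preceding theorems. The only point that warrants a moment's attention is the asymmetry in their hypotheses---the equivalence ``S-property $\Leftrightarrow$ polynomial'' holds for every nonempty $S\subseteq S_n$, whereas ``DPL $\Leftrightarrow$ S-property'' genuinely exploits the isometric-subgraph (permutograph) structure of $S$ through the induction on shortest-path length in the necessity direction. It is therefore appropriate to state the present theorem for permutographs specifically, since that is the weakest common hypothesis under which the full three-way equivalence is available.
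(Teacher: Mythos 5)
Your proposal is correct and matches the paper exactly: the paper explicitly presents this theorem as a summary of Theorems~\ref{S=poly} and~\ref{DPL=S}, obtained by chaining the two equivalences just as you describe. Your remark on the asymmetry of hypotheses is a sensible extra check but introduces nothing beyond the paper's own (implicit) argument.
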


\section{Piecewise linear functions on $\Ree^d$} \label{PL-functions}

A {\em closed domain} in $\Ree^d$ is the closure of a nonempty open set in $\Ree^d$. In this section, $D$ is a convex closed domain in $\Ree^d$ and $\{g_i(x)\}_{1\le i\le n}$ is a family of distinct (affine) linear functions on $D$. We assume that 
$$
\text{int}(D)\cap\text{ker}(g_i-g_j)\neq\es
$$
for at least one pair of distinct functions $g_i$ and $g_j$, where $\text{int}(D)$ stands for the interior of $D$.

Let $\HHH$ be the arrangement of all distinct hyperplanes in $\Ree^d$ that are solutions of the equations in the form $g_i(x)=g_j(x)$ and have nonempty intersections with $\text{int}(D)$. We denote by $\RRR$ the family of nonempty intersections of the regions of $\HHH$ with $\text{int}(D)$ and use the same name `region' for elements of $\RRR$. The {\em region graph} $\GGG$ of the arrangement $\HHH$ has $\RRR$ as the set of vertices; the edges of the graph are pairs of adjacent regions.

It is easy to see that the functions $g_1,\ldots,g_n$ are linearly ordered over any region in $\RRR$, that is, for a given $R\in\RRR$ there is a permutation $(i_1\cdots i_n)$ such that
$$
g_{i_1}(x)<\cdots<g_{i_n}(x)\qquad\text{for all $x\in R$.}
$$
This correspondence defines a mapping $\varphi:\RRR\to S_n$. We treat the graph $\GGG$ as a weighted graph: the weight of an edge $PQ$ is the distance between permutations $\varphi(P)$ and $\varphi(Q)$.

\begin{theorem} \label{G is permutograph}
The mapping $\varphi$ defines an isometric embedding of the weighted region graph $\GGG$ into the big permutograph $\Upsilon_n$. Thus the image $\varphi(\GGG)$ of the region graph is a permutograph.
\end{theorem}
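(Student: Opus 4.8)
The plan is to unpack the statement into three claims and verify each in turn: that $\varphi$ is injective, that it sends every edge of $\GGG$ to an edge of $\Upsilon_n$, and that it preserves the weighted path-metric, the last being the substance of the result. The first I would dispatch at once by observing that $\varphi(R)$ is merely the record of the signs of the affine functions $g_i-g_j$ on $R$: by definition $g_i<g_j$ on $R$ exactly when $i$ precedes $j$ in $\varphi(R)$. Two distinct regions of $\RRR$ lie on opposite open sides of some hyperplane of $\HHH$, hence disagree on the sign of at least one $g_i-g_j$ and so receive distinct permutations; thus $\varphi$ is one-to-one.

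The core of the edge analysis is a single hyperplane crossing. If $P,Q\in\RRR$ are adjacent, their closures share a facet lying on exactly one hyperplane $H\in\HHH$ (distinct hyperplanes meet in codimension $\ge 2$, so a $(d-1)$-face determines $H$ uniquely). At a relative-interior point of the facet the only coincidences $g_i=g_j$ are those with $\{g_i=g_j\}=H$; since two affine functions agreeing on $H$ differ by a nonzero multiple of a defining form of $H$, this is an equivalence relation whose classes group the indices by their common value on $H$. I would then note that, because the order of the $g_i$ is constant on $P$ and distinct classes take distinct values on $H$, the classes are consecutive intervals of $\varphi(P)$ and so determine an ordered partition $\pi=(X_1,\ldots,X_m)$. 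Crossing $H$ reverses the sign of $g_i-g_j$ precisely for pairs inside a common class and fixes every other sign; this is exactly the passage with $\varphi(P)\varphi(Q)^{-1}=\tau_\pi$, so the images are $\pi$-adjacent and the weight $d(\varphi(P),\varphi(Q))$ counts the pairs reversed at this crossing.

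For isometry I would prove matching bounds. Writing $H_{ij}$ for the hyperplane $g_i=g_j$, the previous step shows that a pair $(i,j)$ is reversed at an edge of a $PQ$-path iff that edge crosses $H_{ij}$; hence the total weight of any path equals $\sum_{(i,j)}(\text{number of its edges crossing }H_{ij})$. Each pair inverted between $P$ and $Q$ must be crossed an odd, hence positive, number of times, giving total weight $\ge d(\varphi(P),\varphi(Q))$. For the reverse inequality I would use convexity: since $\text{int}(D)$ is convex and contains $P$ and $Q$, join a point of $P$ to a point of $Q$ by a segment and perturb its endpoints generically so that it meets the hyperplanes of $\HHH$ one at a time, away from their pairwise intersections. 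The regions it traverses form a $PQ$-path, and along the segment each affine $g_i-g_j$ is monotone, hence changes sign at most once and does so exactly when $(i,j)$ is inverted between $P$ and $Q$; so this path crosses $H_{ij}$ once for each inverted pair and never otherwise, and its weight equals $d(\varphi(P),\varphi(Q))$. Combining the bounds, the weighted distance in $\GGG$ equals the inversion distance of the images. The same parity bound holds in $\Upsilon_n$, while adjacent-transposition geodesics in the spanning permutohedron realize it, so the big-permutograph metric is also the inversion distance; hence $\varphi$ is an isometric embedding whose image is, by definition, a permutograph.

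The two delicate points, and where I expect the real work to lie, are the bookkeeping at a crossing, where a single hyperplane may carry several coincidences $g_i=g_j$ and one must recognize the full block-reversal structure of a $\pi$-adjacency rather than a simple transposition, and the genericity of the connecting segment, which must be arranged to cross the hyperplanes transversally and one at a time while staying inside $\text{int}(D)$. It is the affineness of the $g_i-g_j$ along a line, forcing at most one sign change, that makes the straight segment a geodesic and thereby forces equality in the isometry.
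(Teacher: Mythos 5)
Your proposal is correct and follows essentially the same route as the paper: injectivity via a separating hyperplane, $\pi$-adjacency of images via the block structure of coincidences on a shared facet, and isometry via a generic straight segment from $P$ to $Q$ along which each affine $g_i-g_j$ changes sign at most once (the paper cites a topological argument for the existence of such a segment, with convexity essential). Your explicit lower bound via the odd-crossing parity argument, and your verification that the weighted metric on $\Upsilon_n$ is the inversion distance, are details the paper leaves implicit but are welcome.
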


\begin{proof}
First we show that $\varphi$ is a one-to-one function. Let $P$ and $Q$ be two distinct regions in $\RRR$. Let $H$ be a hyperplane in $\HHH$ separating $P$ and $Q$. The hyperplane $H$ is the solution set of some equation $g_i=g_j$ with $i\neq j$. Therefore the functions $g_i$ and $g_j$ are in reverse order over regions $P$ and $Q$. It follows that the permutations $\varphi(P)$ and $\varphi(Q)$ are distinct.

Now we show that the permutations corresponding to two adjacent regions $P,Q\in\RRR$ are adjacent vertices in the graph $\Upsilon_n$. Let $H\in\HHH$ be the hyperplane separating $P$ and $Q$ and $F\SB H$ be the relative interior of the common facet of $P$ and $Q$. It is clear that two components $g_i$ and $g_j$ are in the reverse order over $P$ and $Q$ if and only if $\text{ker}(g_i-g_j)=H$. It follows that the values $g_i(x)$ of the components over $F$ define the same ordered partition $\pi$ of the set $X=\{1,\ldots,n\}$ for every $x\in F$. For this partition $\pi$, the permutations $\varphi(P)$ and $\varphi(Q)$ are $\pi$-adjacent.

To complete the proof we need to prove that the image of $\GGG$ under $\varphi$ is an isometric subgraph of $\Upsilon_n$. Let $P$ and $Q$ be two distinct regions in $\RRR$. A simple topological argument (cf.~\cite{sO05}; convexity of the domain is essential in this argument) shows that there are points $p\in P$ and $q\in Q$ such that the line segment $[p,q]$ does not intersect cells of $\HHH$ of dimension less than $d-1$. The regions with nonempty intersections with $[p,q]$ form a path $R_0=P,R_1,\ldots,R_m=Q$ in $\GGG$. Since functions $g_i$'s are linear, the number of inversions for the pair $\{\varphi(P),\varphi(Q)\}$ equals the total number of inversions corresponding to the pairs of adjacent regions in the path. Since the number of inversions for two adjacent regions is the weight of the edge joining these regions, the length of the path $\varphi(R_0),\ldots,\varphi(R_m)$ equals the distance between $\varphi(P)$ and $\varphi(Q)$, that is, $\varphi$ is an isometric embedding.
\end{proof}

\begin{definition}
{\rm Let $D$ be a convex closed domain in $\Ree^d$. A function $f:D\to\Ree$ is said to be a {\em piecewise linear function} ({\em PL-function}) if there is a finite family $\DDD$ of closed domains such that $D=\cup\,\DDD$ and $f$ is (affine) linear on every domain in $\DDD$. A linear function $g$ such that $g|_R=f|_R$ for some domain $R\in\DDD$ is said to be a {\em component} of $f$.
}
\end{definition}

Note that in applied papers (see, for instance,~\cite{jT99,rW63} and references thereof) the domain $D$ is a convex polyhedron in $\Ree^d$.

Clearly, a PL-function on $D$ is continuous. Let $f$ be a PL-function on a given convex closed domain $D\SB\Ree^d$ and $\{g_1,\ldots,g_n\}$ be the set of components of $f$.

Let us define a function $F(\aa)$ on the set of vertices of the permutograph $\varphi(\GGG)$ as follows:
$$
F(\aa)=G_i(\aa)\qquad\EQ\qquad f(x)=g_i(x)\quad\text{for $x\in\varphi^{-1}(\aa)$.}
$$
Because $f$ is a continuous function, the function $F$ is a DPL-function on $\varphi(\GGG)$. By Theorem~\ref{DPL representation}, there is a family $\{K_i\}_{i\in I}$ such that
$$
F(\aa)=\bigvee_{i\in I}\bigwedge_{j\in K_i} G_j(\aa).
$$
The functions $g_i$'s are ordered over a given region $R$ as functions $G_i$'s are ordered with respect to the relation $<_\aa$ for $\aa=\varphi(R)$. Therefore we have the following theorem (Theorem 2.1 in~\cite{sO02}).

\begin{theorem} \label{PL theorem}
Let $f$ be a PL-function on a convex closed domain $D$ in $\Ree^d$ and $\{g_1,\ldots,g_n\}$ be the set of components of $f$. There is a family $\{K_i\}_{i\in I}$ of subsets of the set $X=\{1,\ldots,n\}$ such that
\begeq \label{PL representation}
f(x)=\bigvee_{i\in I}\bigwedge_{j\in K_i} g_j(x),\qquad\text{for $x\in D$.}
\endeq
The converse is also true: Let $\{g_1,\ldots,g_n\}$ be a family of affine linear functions on $D$. Then a function in the form~{\rm(\ref{PL representation})} is a PL-function.
\end{theorem}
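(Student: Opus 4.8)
The plan is to assemble the combinatorial machinery of Sections~\ref{permutographs} and~\ref{functions on permutations} and then transport the resulting identity across the order-isomorphism $j\mapsto g_j$. Begin with the forward direction. By Theorem~\ref{G is permutograph} the weighted region graph $\GGG$ is identified, via $\varphi$, with a permutograph $S=\varphi(\GGG)$ on $S_n$. On each region $R\in\RRR$ the function $f$ is linear: $R$ is open and connected and contains no wall $\ker(g_a-g_b)$, while the transition set between any two linear pieces of $f$ is contained in such a wall (this is where continuity of $f$ enters), so a finite clopen-partition argument forces $f|_R$ to coincide with a single component $g_{s(R)}$. Distinctness of the components makes the index $s(R)$ well defined, and I would set $F:S\to X$ by $F(\aa)=G_{s}(\aa)$ where $g_s=f$ on $\varphi^{-1}(\aa)$.

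The substantive step is to verify that $F$ is a DPL-function, and this is precisely where continuity does the real work. Let $\aa=\varphi(P)$ and $\bb=\varphi(Q)$ be $\pi$-adjacent, sharing the facet $F_0\SB H$. As in the proof of Theorem~\ref{G is permutograph}, the ordered partition $\pi=(X_1,\dots,X_m)$ groups the component indices according to the relation ``equal on $F_0$'' (equivalently $\ker(g_a-g_b)=H$), which is an equivalence whose classes are order-intervals, and each block satisfies $X_i\aa=X_i\bb$ because reversal within a block fixes the underlying index set. If $f=g_s$ on $P$ and $f=g_t$ on $Q$, then continuity of $f$ across $F_0$ gives $g_s=g_t$ on $F_0$, so $s$ and $t$ lie in one class, hence in a common block: $F(\aa)=s\in X_i\aa$ and $F(\bb)=t\in X_i\bb$. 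Thus $F$ is a DPL-function.

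With $S$ a permutograph and $F$ a DPL-function, Theorem~\ref{DPL representation} supplies a family $\{K_i\}_{i\in I}$ with $F(\aa)=\bigvee_{i\in I}\bigwedge_{j\in K_i}G_j(\aa)$, the meets and joins computed with respect to $\le_\aa$. The last step is to transport this along $j\mapsto g_j(x)$ for $x\in R=\varphi^{-1}(\aa)$. This map is an order-isomorphism of $(X,\le_\aa)$ onto $\{g_1(x),\dots,g_n(x)\}$, since $j<_\aa j'$ holds exactly when $g_j(x)<g_{j'}(x)$ on $R$; order-isomorphisms commute with $\min$ and $\max$, so evaluating yields $f(x)=g_{F(\aa)}(x)=\bigvee_{i\in I}\bigwedge_{j\in K_i}g_j(x)$ with ordinary real $\min$ and $\max$, for every $x\in R$. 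As the regions are dense in $\text{int}(D)$ and both sides are continuous, the identity extends to $\text{int}(D)$ and then to $D=\overline{\text{int}(D)}$.

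For the converse I would argue directly that a finite min--max combination of affine functions is piecewise linear. On each full-dimensional chamber of the arrangement $\HHH$ of the hyperplanes $\ker(g_a-g_b)$ the order of all the $g_j$ is constant, so each inner term $\bigwedge_{j\in K_i}g_j$ equals a fixed $g_j$ and the outer $\bigvee$ equals a fixed $g_i$; hence $f$ agrees with a single affine function on the closure of that chamber. The closed chambers intersected with $D$ form a finite family $\DDD$ of closed domains covering $D$ on which $f$ is linear, so $f$ is a PL-function. I expect the DPL verification of the second paragraph to be the only genuinely substantive point; the remainder is bookkeeping through the order-isomorphism together with citations of Theorems~\ref{G is permutograph} and~\ref{DPL representation}.
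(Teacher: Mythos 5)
Your proof is correct and follows the paper's own route: define $F$ on the permutograph $\varphi(\GGG)$, verify that continuity of $f$ across shared facets makes $F$ a DPL-function, apply Theorem~\ref{DPL representation}, and transport the lattice identity back through the order-isomorphism $j\mapsto g_j(x)$ on each region. You spell out several details the paper leaves implicit (the connectedness argument showing $f$ equals a single component on each region, the density-and-continuity extension from the regions to all of $D$, and the proof of the converse), but the approach is essentially the same.
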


The following simple corollary is of importance in some applications (see~\cite{sB95,dM86,sW04,cW05}):

\begin{corollary}
A PL-function is representable as a difference of two concave (equivalently, convex) PL-functions.
\end{corollary}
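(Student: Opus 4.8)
The plan is to show that the class $\mathcal{C}$ of functions on $D$ of the form $\phi-\psi$, where $\phi$ and $\psi$ are convex PL-functions, contains the affine components $g_j$ and is closed under the lattice operations $\vee$ and $\wedge$; the representation~(\ref{PL representation}) of Theorem~\ref{PL theorem} then places $f$ in $\mathcal{C}$ automatically. First I would record two trivialities. Each affine $g_j$ lies in $\mathcal{C}$, since $g_j=g_j-0$ and affine functions are convex. And the ``convex'' and ``concave'' formulations agree: if $f=\phi-\psi$ with $\phi,\psi$ convex, then $f=(-\psi)-(-\phi)$ with $-\psi,-\phi$ concave, so being a difference of two convex PL-functions is the same as being a difference of two concave PL-functions. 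Note also that $\mathcal{C}$ is visibly closed under negation.

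The heart of the argument is closure under $\vee$. Given $f_1=\phi_1-\psi_1$ and $f_2=\phi_2-\psi_2$ in $\mathcal{C}$, I would use the elementary pointwise identity $(a\vee b)+c=(a+c)\vee(b+c)$ to absorb $\psi_1+\psi_2$ into the maximum, obtaining
\[
f_1\vee f_2=\bigl[(\phi_1+\psi_2)\vee(\phi_2+\psi_1)\bigr]-(\psi_1+\psi_2).
\]
Here $\phi_1+\psi_2$, $\phi_2+\psi_1$, and $\psi_1+\psi_2$ are all convex, being sums of convex functions, and the maximum of two convex functions is again convex; hence the bracketed term is convex and $f_1\vee f_2\in\mathcal{C}$. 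Closure under $\wedge$ then comes for free from $f_1\wedge f_2=-\bigl((-f_1)\vee(-f_2)\bigr)$ together with closure under negation.

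To finish, I would invoke Theorem~\ref{PL theorem}: the representation
\[
f=\bigvee_{i\in I}\bigwedge_{j\in K_i}g_j
\]
expresses $f$ through finitely many applications of $\vee$ and $\wedge$ to the affine functions $g_j$ (the index set $I$ and each $K_i$ being finite). Since $\{g_j\}\subset\mathcal{C}$ and $\mathcal{C}$ is closed under both operations, a routine induction on the number of operations gives $f\in\mathcal{C}$, which is the assertion. The only subtlety, and the step I expect to require the most care, is recognizing that one must argue within the difference-of-convex class rather than within the convex or concave class itself: a maximum of concave functions need not be concave, and it is precisely the absorption identity above that keeps the convex ``positive part'' convex while shunting the defect into the subtracted convex term.
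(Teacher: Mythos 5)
Your proof is correct, but it is organized differently from the paper's. The paper works directly with the disjunctive normal form $f=\bigvee_{i\in I}h_i$ where $h_i=\bigwedge_{j\in K_i}g_j$ is concave, and produces the two concave functions in closed form via the identity
$$
\bigvee_{i\in I}h_i=\sum_k h_k-\bigwedge_{i\in I}\sum_{k\neq i}h_k,
$$
which is a one-step application of the same translation-invariance of $\vee$ that you use ($h_i=\sum_k h_k-\sum_{k\neq i}h_k$, and the common summand $\sum_k h_k$ pulls out of the maximum). You instead prove the structural fact that the difference-of-convex class $\mathcal{C}$ is closed under $\vee$ and $\wedge$ and then induct over the lattice polynomial. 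Your route is slightly longer but more robust: it applies to any finite $\vee$/$\wedge$ expression in the $g_j$'s, not just one in disjunctive normal form, and it isolates the reusable fact that DC functions form a lattice. The paper's route buys an explicit, non-recursive formula for the two concave (or convex) summands. One point worth making explicit in your version, which the paper also glosses over, is that all the functions appearing in your decompositions (sums and maxima of PL-functions on the convex domain $D$) are again PL-functions, so the terms of the final difference really are \emph{concave PL-functions} and not merely concave; this is routine but should be said.
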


\begin{proof}
Let $f$ be a PL-function in the form~(\ref{PL representation}) and let $h_i(x)=\bigwedge_{j\in K_i} g_j(x)$. Note that $h_i$'s are concave functions. Since
$$
h_i = \sum_k h_k-\sum_{k\neq i}h_k,
$$
we have
$$
f=\bigvee_{i\in I}h_i=\sum_k h_k-\bigwedge_{i\in I}\sum_{k\neq i}h_k
$$
Because sums and minimums of concave functions are concave, we have the desired representation.
\end{proof}

Clearly, a PL-function $f$ on $D$ is a `selector' of its components $g_j$'s, that is, for any $x\in D$ there is $i$ such that $f(x)=g_i(x)$ (cf.~Section~\ref{selectors}). Conversely, let $f$ be a continuous selector of a family of linear functions $\{g_1,\ldots,g_n\}$ and let $R$ be a region of the arrangement $\HHH$ defined by this family over $D$. The functions $g_i$'s are linearly ordered over $R$. Since $f$ is a continuous function and $R$ is connected, we must have $f=g_i$ over $R$ for some index $i$. It follows that a continuous selector $f$ is a PL-function on $D$ and therefore admits a polynomial representation~(\ref{PL representation}).  This case is of interest in the ``nonsmooth critical point theory''~(see~\cite{sB95}).

\section{Selectors and invariant functions} \label{selectors}

Let $X$ be an arbitrary set and $D$ be a subset of $X^d$. Let $\{g_1,\ldots,g_n\}$ be a family of functions on $D$ with values in $X$. A function $f:D\to X$ is said to be a {\em selector of the functions $g_i$'s} if for any $x\in D$ there is $i$ such that $f(x)=g_i(x)$. A {\em coordinate selector} (cf.~\cite{vM07}) is a selector of the coordinate functions $g_i(x)=x_i$ for $1\le i\le d$. 

In the rest of this section we assume that $X$ is a linearly ordered set endowed with interval topology (see Appendix~\ref{appendix} for notations and relevant results).

Suppose that $X$ is a connected space and let $f$ be a continuous coordinate selector on $X^d$. For a given permutation $\aa\in S_d$, the sets
$$
A_i=\{x\in\OOO_\aa : f(x)=x_i\},\qquad 1\le i\le d,
$$
are closed disjoint sets and the chamber $\OOO_\aa$ is their finite union. Since $\OOO_\aa$ is a connected set (Theorem~\ref{connected}), we must have $f(x)=x_k$ on $\OOO_\aa$ for some $1\le k\le d$. We define a function $F$ on the permutohedron graph $\Pi_{d-1}$ by letting $F(\aa)=k$ if $f(x)=x_k$ on $\OOO_\aa$. By Theorem~\ref{chambers}(2), $F$ is a DPL-function on the permutograph $\Pi_{d-1}$. By Theorems~\ref{DPL representation} and~\ref{chambers}(1), we have the following result:

\begin{theorem} \label{selector=polynom}
Let $X$ be a connected linear order. A function $f:X^d\to X$ is a continuous coordinate selector if and only if it is a lattice polynomial in variables $x_1,\ldots,x_d$.
\end{theorem}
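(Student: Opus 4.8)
The plan is to prove the two implications separately, using the combinatorial characterization of Theorem~\ref{DPL representation} as the bridge for the harder direction and the elementary topology of linearly ordered spaces for the easier one.

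For necessity (a continuous coordinate selector is a polynomial), I would fix a permutation $\aa\in S_d$ and examine the chamber $\OOO_\aa$, the set of points of $X^d$ whose coordinates are strictly ordered according to $\aa$. Since $X$ carries the interval topology it is Hausdorff, hence so is the product $X^d$, and therefore each set $A_i=\{x\in\OOO_\aa:f(x)=x_i\}$ is closed, being the preimage of the diagonal of $X\times X$ under the continuous map $x\mapsto(f(x),x_i)$. On $\OOO_\aa$ the coordinates are pairwise distinct, so the selector hypothesis forces $\OOO_\aa$ to be the disjoint union of the finitely many closed sets $A_1,\dots,A_d$. Invoking the connectedness of the chamber (Theorem~\ref{connected}), and the fact that a connected space cannot be split into two or more nonempty closed pieces, exactly one $A_k$ is nonempty; that is, $f\equiv x_k$ on $\OOO_\aa$. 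This defines a function $F$ on the vertex set of $\Pi_{d-1}$ by $F(\aa)=k$.

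The key step is then to certify that $F$ is a DPL-function in the sense of~(\ref{DPL}); this is exactly Theorem~\ref{chambers}(2), where the continuity of $f$ across the common boundary of two chambers with $\pi$-adjacent permutations confines the selected index to a single block of $\pi$. Once $F$ is known to be a DPL-function, Theorem~\ref{DPL representation} supplies a family $\{K_i\}_{i\in I}$ of subsets of $\{1,\dots,d\}$ with $F(\aa)=\bigvee_{i\in I}\bigwedge_{j\in K_i}G_j(\aa)$. Because the coordinates $x_j$ are ordered over $\OOO_\aa$ exactly as the constants $G_j$ are ordered by $<_\aa$ (Theorem~\ref{chambers}(1)), this identity transfers to $f(x)=\bigvee_{i\in I}\bigwedge_{j\in K_i}x_j$ on the union of the chambers, in parallel with the passage from Theorem~\ref{DPL representation} to Theorem~\ref{PL theorem}. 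To extend the identity from the union of chambers to all of $X^d$, I would use that a connected linear order is densely ordered, so the diagonals $\{x_i=x_j\}$ are nowhere dense and the union of chambers is dense; since both $f$ and the polynomial are continuous, agreement on a dense set gives agreement everywhere. For sufficiency, suppose $f=\bigvee_{i\in I}\bigwedge_{j\in K_i}x_j$. That $f$ is a coordinate selector is purely algebraic: for each fixed $x$ every inner meet $\bigwedge_{j\in K_i}x_j$ is one of the coordinates $x_j$, being a finite minimum, and the outer join is again one of these coordinates. Continuity follows because in a linearly ordered topological space the maps $\wedge=\min$ and $\vee=\max$ from $X\times X$ to $X$ are continuous, and each projection $x\mapsto x_j$ is continuous, so $f$ is a finite composite of continuous maps.

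I expect the genuine difficulty to lie entirely on the necessity side and to be concentrated in the two appendix inputs rather than in the reduction itself. Establishing that every chamber $\OOO_\aa$ is connected for an arbitrary connected linear order (Theorem~\ref{connected}) is where the real topology lives, since chambers are cut out by strict inequalities and need not be path-connected or convex in any naive sense; and verifying the DPL condition (Theorem~\ref{chambers}(2)) requires showing that adjacent chambers genuinely share a boundary across which continuity can be applied, and that crossing it changes the selected coordinate only within one block of the partition. With those facts in hand, the appeal to Theorem~\ref{DPL representation} and the sufficiency direction are routine.
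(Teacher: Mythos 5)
Your proposal is correct and follows essentially the same route as the paper: connectedness of chambers (Theorem~\ref{connected}) forces $f=x_k$ on each chamber, Theorem~\ref{chambers}(2) makes the induced $F$ a DPL-function on $\Pi_{d-1}$, Theorem~\ref{DPL representation} gives the polynomial, and density of the chambers plus continuity extends the identity to all of $X^d$. The only quibble is a misplaced citation (the ordering of the $x_j$ over $\OOO_\aa$ matching $<_\aa$ is just the definition of a chamber, not Theorem~\ref{chambers}(1), which you correctly invoke only for density).
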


Note that the result of this theorem does not hold for disconnected linear orders. Indeed, let $X=U\cup V$ where $U$ and $V$ are nonempty disjoint open sets and let us define $f:X^d\to X$ by
$$
f(x_1,\ldots,x_d)=\begin{cases}
	x_1, &\text{if $x_1\in U$,}\\
	x_2, &\text{if $x_1\in V$.}
\end{cases}
$$
The function $f$ is a continuous selector which is not representable as a lattice polynomial.

Let $X$ be a linear order and let $f$ be a lattice polynomial in variables $x_1,\ldots,x_d$, that is,
\begeq \label{selector poly}
f(x)=\bigvee_{i\in I}\bigwedge_{j\in K_i} x_j,\qquad\text{for $x=(x_1,\ldots,x_d)\in X^d$,}
\endeq
where $\{K_i\}_{i\in I}$ is a family of subsets of the set $\{1,\ldots,d\}$. It is clear that $f(x\psi)=f(x)\psi$ for any automorphism $\psi\in\AAA(X)$ (cf.~Appendix~\ref{appendix}). We show below that the converse is true for a special class of linear orders.

In the rest of this section, $X$ is a doubly homogeneous linear order, that is, there is 
a doubly transitive $\ell$-permutation group $G$ acting on $X$. By Theorem~\ref{homogeneity}, $G$ is $m$-transitive for all $m\ge 2$. 

A function $f:X^d\to X$ is said to be {\em invariant} (under actions from $G$) if 
$$
f(x_1\psi,\ldots,x_d\psi)=f(x_1,\ldots,x_d)\psi
$$
for all $(x_1,\ldots,x_d)\in\Ree^d$ and $\psi\in G$.

\begin{theorem} \label{invariant->selector}
An invariant function $f:X^d\to X$ is a coordinate selector.
\end{theorem}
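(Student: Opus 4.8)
The plan is to argue by contradiction, using the $m$-transitivity of $G$ (Theorem~\ref{homogeneity}) to manufacture an automorphism that fixes every coordinate of a point while moving the alleged value of $f$ there. Suppose $f$ is invariant but fails to be a coordinate selector, so that for some $x=(x_1,\ldots,x_d)\in X^d$ the value $y:=f(x)$ differs from every coordinate $x_i$. Let $z_1<\cdots<z_k$ be the distinct values occurring among $x_1,\ldots,x_d$, where $1\le k\le d$. Since $y\notin\{z_1,\ldots,z_k\}$, the point $y$ lies strictly inside exactly one of the $k+1$ open intervals cut out by the $z_j$'s, namely one of the two unbounded intervals $(-\infty,z_1)$, $(z_k,+\infty)$ or a bounded gap $(z_i,z_{i+1})$.

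First I would record the order-theoretic input that double homogeneity supplies: the linear order $X$ has no endpoints and is densely ordered. Both are consequences of double transitivity, since an element of the $\ell$-permutation group $G$ is an order automorphism and therefore preserves the least and greatest elements as well as the covering relation; these facts are available from Appendix~\ref{appendix}. Density guarantees that each bounded gap is infinite, and the absence of endpoints guarantees that each unbounded interval is infinite, so in every case I can choose a point $y'$ lying in the \emph{same} interval as $y$ with $y'\ne y$.

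Next I would invoke $(k+1)$-transitivity. Inserting $y$ (respectively $y'$) into the increasing sequence $z_1,\ldots,z_k$ at the slot prescribed by the interval containing it produces two strictly increasing $(k+1)$-tuples that agree in all entries except the one occupied by $y$ versus $y'$. By Theorem~\ref{homogeneity} there is $\psi\in G$ carrying the first tuple onto the second; in particular $z_j\psi=z_j$ for every $j$ while $y\psi=y'$. Because each $x_i$ equals some $z_j$, we get $x_i\psi=x_i$ for all $i$, hence $x\psi=x$. The contradiction is then immediate: on one hand $f(x\psi)=f(x)=y$ since $x\psi=x$, while on the other hand invariance gives $f(x\psi)=f(x)\psi=y\psi=y'$, forcing $y=y'$ against $y'\ne y$. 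Thus no such $x$ exists and $f$ is a coordinate selector.

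I expect the only delicate point to be the order-theoretic groundwork—verifying that $X$ is dense and without endpoints so that a distinct second point $y'$ is genuinely available in the relevant interval—together with the small bookkeeping of which $(k+1)$-tuple to feed into the transitivity hypothesis when $y$ falls into one of the unbounded intervals. Once a point $y'\ne y$ in the same interval is secured, the $m$-transitivity of $G$ does all of the real work, and the invariance identity closes the argument in one line.
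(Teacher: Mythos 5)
Your proof is correct and follows essentially the same route as the paper: assume $y=f(x)$ avoids every coordinate, then use the $m$-transitivity supplied by Theorem~\ref{homogeneity} to produce $\psi\in G$ fixing $x_1,\ldots,x_d$ with $y\psi\neq y$, and derive a contradiction from invariance. The only difference is that you spell out the details the paper leaves implicit (density and absence of endpoints, the explicit $(k+1)$-tuples), which is a legitimate and welcome elaboration rather than a different argument.
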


\begin{proof}
Let $f$ be an invariant function on $X^d$. If $y=f(x_1,\ldots,x_d)$, then we have
$$
y=f(x_1,\ldots,x_d)=f(x_1\psi,\ldots,x_d\psi)=f(x_1,\ldots,x_d)\psi=y\psi,
$$
for any automorphism $\psi\in G$ that fixes elements $x_1,\ldots,x_d$. Suppose that $y\neq x_i$ for all $1\le i\le d$. Since $X$ is $(d+1)$-homogeneous, there is an automorphism in $G$ that fixes elements $x_1,\ldots,x_d$ and such that $y\psi\neq y$, a contradiction. Therefore, $f(x_1,\ldots,x_d)\in\{x_1,\ldots,x_d\}$, that is, $f$ is a coordinate selector.
\end{proof}

Note that the linear order $X$ is not necessarily connected, so we cannot simply apply the result of Theorem~\ref{selector=polynom} to show that an invariant function is a polynomial. However this result holds as the following argument demonstrates.

Since $X$ is $d$-homogeneous and the coordinates appear in the same order for all points in a chamber $\OOO_\aa$, we must have (by Theorem~\ref{invariant->selector}) $f(x)=x_k$ on $\OOO_\aa$ for some $1\le k\le d$. As in the previous section, we define a function $F$ on the permutohedron graph $\Pi_{d-1}$ by letting $F(\aa)=k$ if $f(x)=x_k$ on $\OOO_\aa$. By Theorem~\ref{chambers}(2), $F$ is a DPL-function on the permutograph $\Pi_{d-1}$. By Theorems~\ref{DPL representation} and~\ref{chambers}(1), we have the following result~\cite{sO98}:

\begin{theorem}
Let $X$ be a doubly homogeneous linear order. A continuous function $f:X^d\to X$ is invariant if and only if it is a lattice polynomial in variables $x_1,\ldots,x_d$.
\end{theorem}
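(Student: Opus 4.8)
The plan is to prove the two implications separately. The forward implication (polynomial $\Rightarrow$ invariant) is the routine one: every $\psi\in G$ is an order-automorphism of $X$, hence it commutes with the lattice operations $\wedge$ and $\vee$ computed in $X$. Applying $\psi$ to the expression $\bigvee_{i\in I}\bigwedge_{j\in K_i}x_j$ therefore merely relabels the arguments, so that $\bigl(\bigwedge_{j\in K_i}x_j\bigr)\psi=\bigwedge_{j\in K_i}(x_j\psi)$ and likewise for the join, giving $f(x\psi)=f(x)\psi$. This is exactly the observation already recorded in the text for arbitrary automorphisms in $\AAA(X)$, specialized to $\psi\in G$.

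For the harder converse (continuous and invariant $\Rightarrow$ polynomial), I would proceed in four steps. First, invoke Theorem~\ref{invariant->selector} to conclude that $f$ is a coordinate selector, so $f(x)\in\{x_1,\ldots,x_d\}$ for every $x\in X^d$. Second --- and this is the crucial step that replaces the connectedness argument of Theorem~\ref{selector=polynom} --- I would show that the selector index is constant on each chamber $\OOO_\aa$, i.e.\ $f(x)=x_k$ throughout $\OOO_\aa$ for a single $k$. The reason is that all points of a chamber share the same strict order type of coordinates, so by $d$-homogeneity (available since $G$ is $m$-transitive for all $m\ge 2$ by Theorem~\ref{homogeneity}) any two such points $x,x'$ are related by some $\psi\in G$ acting coordinatewise, $x_j\psi=x'_j$; invariance then yields $f(x')=f(x)\psi=x_k\psi=x'_k$, so the same index is selected at $x'$. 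This defines a function $F$ on the permutohedron graph $\Pi_{d-1}$ by $F(\aa)=k$.

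Third, I would use continuity of $f$ to show that $F$ is a DPL-function; this is precisely the content of Theorem~\ref{chambers}(2), whose role is to convert the continuity of $f$ across the common wall of two adjacent chambers into the DPL condition~(\ref{DPL}) on the values of $F$ at the corresponding adjacent permutations. Fourth, apply Theorem~\ref{DPL representation}: a DPL-function on a permutograph is a polynomial, so there is a family $\{K_i\}_{i\in I}$ with $F(\aa)=\bigvee_{i\in I}\bigwedge_{j\in K_i}G_j(\aa)$. Translating this back by means of Theorem~\ref{chambers}(1) --- which identifies the combinatorial ordering $<_\aa$ on a chamber with the actual order of the coordinates there --- yields $f(x)=\bigvee_{i\in I}\bigwedge_{j\in K_i}x_j$, as required.

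The main obstacle is the second step: establishing that $f$ is constant-index on each chamber without appealing to connectedness of $X$. This is exactly the gap illustrated by the counterexample preceding Theorem~\ref{selector=polynom}, where a mere continuous selector on a disconnected $X$ failed to be a polynomial. The hypothesis that rescues the argument here is double (hence $m$-fold, for all $m$) homogeneity: it supplies enough automorphisms to carry any point of a chamber to any other while preserving the coordinate order, so that invariance propagates the selected index across the entire chamber. Once this is secured, the remaining steps are bookkeeping that reuse the machinery of Section~\ref{functions on permutations} together with the chamber results of Appendix~\ref{appendix}.
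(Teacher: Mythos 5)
Your proposal is correct and follows essentially the same route as the paper: reduce to a coordinate selector via Theorem~\ref{invariant->selector}, use $d$-homogeneity of $X$ to make the selected index constant on each chamber, pass to a DPL-function on $\Pi_{d-1}$ via Theorem~\ref{chambers}(2), and conclude with Theorem~\ref{DPL representation}. The only slip is cosmetic: Theorem~\ref{chambers}(1) asserts that the chambers are open with dense union, so its role in your last step is to extend the polynomial identity from the union of the chambers to all of $X^d$ by continuity, not to identify $<_\aa$ with the coordinate order on a chamber.
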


We define the $k$th {\em order statistic} (cf.~Example~\ref{statistic}) $x^{(k)}$ on $X^d$ by arranging a $d$-tuple $(x_1,\ldots,x_d)$ in the increasing order:
$$
x^{(1)}\le\cdots\le x^{(k)}\le\cdots\le x^{(d)}.
$$
Clearly, an order statistic is a symmetric, continuous, and invariant function on $X^d$. The converse is also true~\cite{sO96}:

\begin{theorem}
Let $f$ be a symmetric continuous invariant function on $X^d$. Then $f$ is an order statistic.
\end{theorem}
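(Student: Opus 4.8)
\section*{Proof proposal}

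The plan is to leverage the representation theorem established immediately above. Since $X$ is doubly homogeneous and $f$ is continuous and invariant, that theorem provides a family $\{K_i\}_{i\in I}$ of subsets of $\{1,\ldots,d\}$ with
$$
f(x)=\bigvee_{i\in I}\bigwedge_{j\in K_i}x_j,\qquad x=(x_1,\ldots,x_d)\in X^d,
$$
where I may take $\{K_i\}_{i\in I}$ to be an antichain in $2^{\{1,\ldots,d\}}$. The goal is then to show that the extra hypothesis of symmetry forces this antichain to be exactly the collection of all subsets of a single fixed cardinality, after which comparison with the order-statistic formula of Example~\ref{statistic} identifies $f$ as some $x^{(k)}$.

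First I would establish that the antichain is uniquely determined by $f$. Being doubly homogeneous, $X$ has at least two elements $a<b$; restricting each coordinate to $\{a,b\}$ turns $\wedge$ and $\vee$ into boolean conjunction and disjunction, so the restriction of $f$ to $\{a,b\}^d$ is precisely the monotone boolean function whose disjunctive normal form has prime implicants $\{K_i\}$. Distinct antichains give distinct such boolean functions, so $\{K_i\}$ is recovered from $f$. Now symmetry means $f(x_{\sigma(1)},\ldots,x_{\sigma(d)})=f(x)$ for every $\sigma\in S_d$, and permuting the coordinates replaces the antichain $\{K_i\}$ by $\{\sigma(K_i)\}$; by uniqueness $\{\sigma(K_i)\}=\{K_i\}$ for all $\sigma$, so $\{K_i\}$ is invariant under the natural $S_d$-action on subsets.

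Next I would classify the $S_d$-invariant antichains. Writing $[d]=\{1,\ldots,d\}$, the orbits of the action on $2^{[d]}$ are exactly the families $\binom{[d]}{s}$ of all $s$-element subsets, so an invariant family is a union of orbits; but a union of two orbits of sizes $s_1<s_2$ violates the antichain condition, since every $s_1$-set lies in some $s_2$-set. Hence $\{K_i\}=\binom{[d]}{s}$ for a single $s$. The degenerate possibilities (the empty family, or $s=0$, which would make $f$ the bottom or top of the lattice) are excluded because $f$ is a coordinate selector by Theorem~\ref{invariant->selector}, so $1\le s\le d$. Setting $k=d-s+1$ and comparing with the formula $M_k=\bigvee_{|Y|=d-k+1}\bigwedge_{j\in Y}x_j$ of Example~\ref{statistic} then shows that $f$ coincides with the order statistic $x^{(k)}$.

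I expect the uniqueness step to be the main obstacle, since it is what converts the combinatorial symmetry of $f$ into $S_d$-invariance of the antichain and must be argued rather than assumed: two formally distinct lattice polynomials can collapse to the same function over a poorly chosen lattice. The resolution is that $X$ contains a two-element subchain on which the polynomial reduces to its underlying monotone boolean function, and over the two-element chain the disjunctive normal form is unique. Once uniqueness is secured, the orbit count and the identification with the order-statistic formula are routine.
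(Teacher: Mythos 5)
Your proof is correct, but it takes a genuinely different route from the paper's. The paper argues directly: fix one increasing tuple $a_1<\cdots<a_d$; by Theorem~\ref{invariant->selector}, $f(a_1,\ldots,a_d)=a_k$ for some $k$; then for any tuple with distinct coordinates, $d$-transitivity supplies $\psi\in G$ with $a_i\psi=x^{(i)}$, and symmetry plus invariance give $f(x_1,\ldots,x_d)=f(a_1,\ldots,a_d)\psi=a_k\psi=x^{(k)}$; finally the union of chambers is dense (Theorem~\ref{chambers}(1)) and $f$ is continuous, so $f=x^{(k)}$ everywhere. That argument is shorter and does not need the lattice-polynomial representation theorem at all --- only the selector property, transitivity, and density. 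Your argument instead starts from the representation theorem for continuous invariant functions, and its real content is the two combinatorial facts you isolate: the antichain in the disjunctive normal form is uniquely recoverable from the function (via restriction to a two-element subchain $\{a,b\}^d$, where the polynomial becomes a monotone Boolean function whose minimal true points are exactly the $K_i$), and the only $S_d$-invariant antichains in $2^{[d]}$ are the uniform families $\binom{[d]}{s}$. Both steps are sound, and as a by-product you get a uniqueness statement for the normal form over a chain and an exact description of which antichains yield symmetric polynomials, neither of which the paper records. The costs are that you rely on the (harder) preceding theorem, and that the final identification $f=x^{(k)}$ requires the identity $x^{(k)}=\bigvee_{|Y|=d-k+1}\bigwedge_{j\in Y}x_j$ to hold at \emph{all} points of $X^d$, including those with tied coordinates; the verification sketched in Example~\ref{statistic} covers only distinct coordinates, so you should note (it is routine) that the identity persists under ties. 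With that small addition your proposal stands as a complete alternative proof.
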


\begin{proof}
For a given sequence $a_1<\cdots<a_d$, we have $f(a_1,\ldots,a_d)=a_k$ for some $1\le k\le d$, by Theorem~\ref{invariant->selector}. Suppose that $x_i$'s are distinct elements of $X$. Since $f$ is $d$-homogeneous, there is $\psi\in G$ such that $a_i\psi=x^{(i)}$ for all $1\le i\le d$. Because $f$ is symmetric and invariant, we have
\begin{align*}
f(x_1,\ldots,x_d)&=f(x^{(1)},\ldots,x^{(d)})=f(a_1\psi,\ldots,a_2\psi)\\
&=f(a_1,\ldots,a_d)\psi=a_k\psi=x^{(k)}.
\end{align*}
Therefore, $f$ is the $k$th order statistics over chambers in $X^d$. The result follows from Theorem~\ref{chambers}(1), since $f$ is a continuous function.
\end{proof}

As in Example~\ref{statistic}, we have the following lattice polynomial representation for the $k$th order statistics:
$$
x^{(k)} = \bigvee_{Y\in X_k}\bigwedge_{j\in Y} x_j,
$$
where $X_k$ is the family of $(d-k+1)$-element subsets of $\{1,\ldots,d\}$. Clearly, $x^{(1)}=\min\{x_1,\ldots,x_d\}$ and $x^{(d)}=\max\{x_1,\ldots,x_d\}$.

\section{Concluding remarks} \label{conclusion}

\newcounter{myrule}
\begin{list}{\arabic{myrule}.}{\setlength{\leftmargin}{12pt}}
\usecounter{myrule}
	\item The statement that a continuous piecewise linear function admits a representation as a max-min composition of its linear components appears to be intuitively clear. Apparently, this result was first stated in~\cite{rW63} and repeated in~\cite{jT99}. As it happens in applied areas, these publications lack precise definitions and assumptions. For instance, the result of Theorem~\ref{PL theorem} does not hold for non-convex domains (see the next remark), but this condition is not used in the `proofs' found in~\cite{jT99}. In a different context, this result appears as Corollary~2.1 in~\cite{sB95}, but again the proof is unsatisfactory. In its present form, the result was formulated and proven independently in~\cite{sO02}. A multidimensional analog of Theorem~\ref{PL theorem} is also found there.
	\item The convexity of the domain $D$ in Theorem~\ref{PL theorem} is an essential assumption. Consider, for instance, the domain
$$
D=\{(x,y)\in\Ree^2 : y\le |x|\}
$$
in $\Ree^2$ and define a PL-function $f$ on $D$ by
$$
f(x,y)=\begin{cases}
	y, &\text{if $x\wedge y\ge 0$,}\\
	0, &\text{otherwise.}
\end{cases}
$$
It is clear that $f$ is not representable as a lattice polynomial in terms of functions $g_1(x,y)=0$ and $g_2(x,y)=y$.

	\item The result of Theorem~\ref{PL theorem} does not hold in general for piecewise polynomial functions. For instance, the function
$$
h(x)=\begin{cases}
	0, &\text{if $x<0$,}\\
	x^2, &\text{if $x\ge 0$,}
\end{cases}
$$
cannot be expressed by means of minimum and maximum operations on the zero function and $x^2$ (polynomial `components' of $h$). On the other hand, we clearly have
$$
h(x)=((x^3+x)\vee 0)\wedge x^2,
$$
that is, $h$ is definable with polynomials by means of the operations $\wedge$ and $\vee$. The ``Pierce-Birkhoff conjecture'' states that any continuous piecewise polynomial function on $\Ree^d$ can be obtained from the polynomial ring $R[x_1,\ldots,x_d]$ by iterating the operations $\wedge$ and $\vee$. (The problem is still open; see~\cite{gB56,mH62,lM84,mS06}).
	\item It is not difficult to show that selectors in the form~(\ref{selector poly}) are in one-to-one correspondence with nonempty antichains of subsets of the set $\{1,\ldots,d\}$. Thus the total number of continuous selectors on $X^d$ is the Dedekind number (entry A007153 in~\cite{nS}).

A more involved problem is counting functions on $X^d$ that can be expressed as lattice polynomials using the operations $\wedge$ and $\vee$, in which every variable appears exactly once. They are known as `read-once expressions' (cf.~\cite{yC07}) and of importance in PL Morse theory~\cite{vM07}. It can be shown that the number $M(d)$ of distinct read-once functions on $X^d$ equals twice the number of total partitions of $d$ and satisfies the reccurence relation
$$
M(n)=(n+1)M(n-1)+\sum_{k=2}^{n-2}\binom{n-1}{k}M(k)M(n-k),
$$
with initial conditions $M(0)=1$, $M(1)=1$, and $M(2)=2$ (cf.~\cite{rS99} and entry A000311 in~\cite{nS}).
\end{list}


\appendix
\section{Appendix: interval topology}\label{appendix}

Let $(X,<)$ be a linear order with $|X|>2$. We write $x\le y$ if $x<y$ or $x=y$ in $X$. An {\em open ray} in $X$ is a subset in the form
$$
(\leftarrow,a)=\{x\in X : x<a\}\quad\text{or}\quad (a,\to)=\{z\in Z : z>a\},\quad a\in X.
$$
An {\em open interval} in $X$ is either an open ray or a subset in the form
$$
(a,b)=\{x\in X : a<x<b\},\qquad \text{for $a<b$ in $X$}.
$$
{\em Closed intervals} $[a,b]$, $(\leftarrow,a]$,  and $[a,\to)$ are defined similarly. A {\em gap} in $X$ is an empty open interval $(a,b)$. The family of open intervals is a base for the {\em interval topology} (order topology) on $X$.

Let $\aa=(i_1\cdots i_d)$ be a permutation of order $d$. A {\em chamber} $\OOO_\aa$ is a subset of $X^d$ defined by
$$
\OOO_\aa=\{(x_1,\ldots,x_d) : x_{i_1}<\cdots<x_{i_d}\}.
$$
Two chambers $\OOO_\aa$ and $\OOO_\bb$ are {\em adjacent} if 
$$
\aa=(i_1\cdots i_k i_{k+1}\cdots i_d)\quad\text{and}\quad \bb=(i_1\cdots i_{k+1} i_k\cdots i_d).
$$

Note that for $X=\Ree$ the chambers in $\Ree^d$ are regions of the braid arrangement in $\Ree^d$.

\begin{theorem} \label{connected}
Let $X$ be a linear order which is connected in its interval topology. Then the chambers of $X^d$ are connected sets.
\end{theorem}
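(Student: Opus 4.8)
The plan is to reduce to a single standard chamber and then prove it is connected by showing that any two of its points lie in a common connected subset, a subset I would build explicitly as a chain of connected closed intervals of $X$.

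First I would reduce to the case where $\aa$ is the identity permutation $\ee$. Permuting the $d$ factors of $X^d$ according to $\aa$ is a self-homeomorphism of $X^d$ that carries the chamber $\OOO_\ee=\{(x_1,\ldots,x_d):x_1<\cdots<x_d\}$ onto $\OOO_\aa$, so it suffices to prove $C:=\OOO_\ee$ is connected. The hypothesis enters only through one standard fact about linear continua, which I would record as a lemma: since $X$ is connected in its interval topology, it is densely ordered and Dedekind complete, and hence every closed interval $[a,b]\subseteq X$ is connected (an interval inherits both properties and so is again a connected linear continuum; cf.\ the characterization of connected LOTS). This is the \emph{only} use of the connectedness assumption.

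Next I would establish two structural facts about $C$. First, $C$ is closed under the coordinatewise minimum: for $p,q\in C$, writing $(p\wedge q)_i=p_i\wedge q_i$, one checks $p_i\wedge q_i<p_{i+1}\wedge q_{i+1}$ (indeed $p_i\wedge q_i\le p_i<p_{i+1}$ and $p_i\wedge q_i\le q_i<q_{i+1}$), so $p\wedge q\in C$. Second, the key step is a \emph{lowering} construction: if $r,p\in C$ satisfy $r_i\le p_i$ for all $i$, then $p$ and $r$ lie in a common connected subset of $C$. I build this as a chain of $d$ segments; at stage $i$ I hold coordinates $1,\ldots,i-1$ at $r_1,\ldots,r_{i-1}$ and coordinates $i+1,\ldots,d$ at $p_{i+1},\ldots,p_d$, and let the $i$th coordinate $t$ range over $[r_i,p_i]$. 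Each such segment is a homeomorphic copy of $[r_i,p_i]$, hence connected, and I would verify it lies in $C$: for every $t\in[r_i,p_i]$ one has $r_{i-1}<t$ (since $t\ge r_i>r_{i-1}$) and $t<p_{i+1}$ (since $t\le p_i<p_{i+1}$), so all the strict inequalities defining $C$ hold. Consecutive segments share an endpoint, so their union is connected and joins $p$ to $r$.

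Finally I would conclude: for arbitrary $p,q\in C$, apply the lowering construction once with $r=p\wedge q\le p$ and once with $r=p\wedge q\le q$; the two chains both lie in $C$ and meet at $p\wedge q$, so their union is a connected subset of $C$ containing $p$ and $q$. Since any two points of $C$ lie in a common connected subset, $C$ is connected, and by the reduction every chamber $\OOO_\aa$ is connected. \textbf{The main obstacle} is precisely keeping every intermediate move inside the \emph{open} chamber, i.e.\ preserving the strict inequalities; the natural real-variable trick of interpolating along a segment is unavailable, and naive coordinatewise motion could require points beyond the range of the current coordinates, which need not exist when $X$ has a least or greatest element. Lowering each coordinate monotonically toward the coordinatewise minimum sidesteps this, since it never calls for a point outside the span already occupied by $r$ and $p$.
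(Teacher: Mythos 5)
Your proposal is correct and follows essentially the same route as the paper: reduce to $\OOO_\ee$ by a coordinate-permuting homeomorphism, then join any two points of the chamber by a chain of one-coordinate `line segments', each connected because closed intervals of a connected linear order are connected. Your coordinatewise-minimum and lowering construction simply makes explicit (and verifies the strict inequalities for) the chain of intermediate points that the paper's proof leaves to the reader as ``not difficult to see.''
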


\begin{proof}
It suffices to show that the chamber $\OOO_\ee=\{x\in X^d : x_1<\cdots<x_d\}$ is a connected set. Indeed, any other chamber is an image of $\OOO_\ee$ under a homeomorphism of $X^d$ onto itself defined by a permutation of coordinates.

For any two points $(x_1,\ldots,x_d)$ and $(y_1,\ldots,y_d)$ in $X^d$ such that $x_i=y_i$ for all $i\neq k$, the `line segment'
$$
\{(z_1,\ldots,z_d):z_i=x_i\; (i\neq k),\; x_k\wedge y_k\le z_k\le x_k\vee y_k\}
$$
is connected. It is not difficult to see that for any two points in the chamber $\OOO_\ee$ there is a sequence of points in $\OOO_\ee$ such that consecutive points differ in exactly one coordinate. The union of corresponding `line segments' is a connected set (a `path') containing the two points. It follows that the chamber $\OOO_\ee$ is a connected set.
\end{proof}

The next theorem puts forth some properties of chambers established in~\cite{sO96} and~\cite{sO98}.

\begin{theorem} \label{chambers}
Let $X$ be a linear order without gaps. 

{\rm(1)} The chambers in $X^d$ are open sets and their union is dense in $X^d$.

{\rm(2)} Let $\OOO_{(i_1\cdots i_k i_{k+1}\cdots i_d)}$ and $\OOO_{(i_1\cdots i_{k+1} i_k\cdots i_d)}$ be two adjacent chambers in $X^d$ and $f:X^d\to X$ be a continuous function such that 
$$
f(x)=x_p\text{~for~}x\in\OOO_{(i_1\cdots i_k i_{k+1}\cdots i_d)}\text{~and~}f(x)=x_q \text{~for~} x\in\OOO_{(i_1\cdots i_{k+1} i_k\cdots i_d)}.
$$
Then one of the following holds:
\roster
	\item[{\rm(i)}] $p=q$,
	\item[{\rm(ii)}] $p=i_k$, $q=i_{k+1}$,
	\item[{\rm(iii)}] $p=i_{k+1}$, $q=i_k$.
\endroster
\end{theorem}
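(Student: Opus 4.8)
The plan is to treat the two parts separately, with part (2) carrying the real content. Throughout, the hypothesis that $X$ has no gaps means precisely that $X$ is order-dense: for any $a<b$ in $X$ the open interval $(a,b)$ is nonempty. Two consequences I will use are that $X$ has no isolated points, so every nonempty open subset of $X$ is infinite, and that $X^d$ carries the product topology, which is Hausdorff since the interval topology on a linear order is always Hausdorff.

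For part (1), to see that a chamber $\OOO_\aa$ with $\aa=(i_1\cdots i_d)$ is open, I would take a point $x\in\OOO_\aa$, so $x_{i_1}<\cdots<x_{i_d}$, and insert separators: using order-density, choose $c_j$ with $x_{i_j}<c_j<x_{i_{j+1}}$ for $1\le j\le d-1$. The box cut out by the conditions $x_{i_1}<c_1$, then $c_{j-1}<x_{i_j}<c_j$, and finally $c_{d-1}<x_{i_d}$ is a basic open neighborhood of $x$ contained in $\OOO_\aa$, so $\OOO_\aa$ is open. For density, I would show that an arbitrary nonempty basic open box $\prod_i U_i$ meets the union of the chambers: each $U_i$ is nonempty open, hence infinite, so one can pick coordinates $x_i\in U_i$ greedily and pairwise distinct, and the resulting point has distinct coordinates and therefore lies in some chamber.

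For part (2), the engine is the following observation. The $p$-th coordinate map $\rho_p\colon X^d\to X$, $\rho_p(x)=x_p$, is continuous, and by hypothesis $f$ agrees with $\rho_p$ on $\OOO_\aa$. Since $X$ is Hausdorff, the equalizer $\{x: f(x)=\rho_p(x)\}$ is closed, being the preimage of the diagonal under $(f,\rho_p)$, so $f$ and $\rho_p$ agree on the whole closure $\overline{\OOO_\aa}$; likewise $f=\rho_q$ on $\overline{\OOO_\bb}$. It therefore suffices to produce a single \emph{wall} point $w\in\overline{\OOO_\aa}\cap\overline{\OOO_\bb}$ whose coordinates satisfy $w_{i_k}=w_{i_{k+1}}$ while all remaining coordinates are pairwise distinct. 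For such a $w$ we obtain $w_p=f(w)=w_q$, and since the only coincidence among the coordinates of $w$ is $w_{i_k}=w_{i_{k+1}}$, this forces either $p=q$, which is case (i), or $\{p,q\}=\{i_k,i_{k+1}\}$, which is exactly cases (ii) and (iii).

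The main obstacle is constructing this wall point and verifying it lies in both closures; this is the only place the no-gaps hypothesis is essential. I would pick values $w_{i_1}<\cdots<w_{i_{k-1}}<a<w_{i_{k+2}}<\cdots<w_{i_d}$ with $a$ not extremal (possible since $X$ is infinite) and set $w_{i_k}=w_{i_{k+1}}=a$. To see $w\in\overline{\OOO_\aa}$, I would check that every basic box around $w$ meets $\OOO_\aa$: keep every coordinate at its $w$-value except $x_{i_k}$, and replace $x_{i_k}$ by a value in $(w_{i_{k-1}},a)$ lying in the given neighborhood of $a$; order-density guarantees such a value exists, and the perturbed point satisfies the strict chain defining $\OOO_\aa$. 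Perturbing $x_{i_{k+1}}$ downward instead shows $w\in\overline{\OOO_\bb}$. Boundary cases where $k=1$ or $k+1=d$ are handled identically using open rays in place of the missing interval constraint. Once $w$ is in place the trichotomy follows immediately, completing the proof.
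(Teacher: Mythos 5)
Your proof is correct, but there is nothing in the paper to compare it against: Theorem~\ref{chambers} is stated without proof, as a summary of properties ``established in~\cite{sO96} and~\cite{sO98}.'' Taken on its own terms, your argument is sound and self-contained. Part (1) is the standard one: inserting separators $c_j$ with $x_{i_j}<c_j<x_{i_{j+1}}$ (possible by the no-gap hypothesis) produces a basic open box around $x$ inside $\OOO_\aa$, and the infinitude of every nonempty open subset of $X$ (which does follow from no gaps together with $|X|>2$) yields a point with pairwise distinct coordinates in any box, hence density. For part (2), the two ingredients fit together correctly: the interval topology is always Hausdorff, so the equalizer $\{x: f(x)=x_p\}$ is closed and $f$ agrees with the $p$-th projection on all of $\overline{\OOO_\aa}$ (likewise for $q$ and $\OOO_\bb$); and the wall point $w$ with $w_{i_k}=w_{i_{k+1}}=a$ as its only coordinate coincidence lies in both closures, since within any basic box one can lower the $i_k$-th (resp.\ $i_{k+1}$-th) coordinate slightly below $a$ while staying above $w_{i_{k-1}}$ --- again exactly where the no-gap hypothesis is needed. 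You correctly flag the one delicate point, namely that $a$ must be non-extremal when $k=1$ or $k+1=d$ so that a value below (or above) $a$ exists inside the given neighborhood. The trichotomy $p=q$ or $\{p,q\}=\{i_k,i_{k+1}\}$ then follows from $w_p=w_q$ as you state.
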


Clearly, the results of Theorem~\ref{chambers} hold for connected linear orders. Another class of linear orders without gaps consists of doubly homogeneous linear orders. For details about ordered permutation groups the reader is referred to~\cite{aG81}.

Let $X$ be a linear order and $\AAA(X)$ be the group of automorphisms (order-preserving permutations) of $X$. This group acquires the pointwise order from $X$, that is, $\aa<\bb$ if and only if $x\aa<x\bb$ for all $x\in X$. This order makes $\AAA(X)$ a lattice-ordered permutation group ($\ell$-permutation group), that is, $\AAA(X)$ is a lattice and the order is preserved by multiplication on both sides. The meet and join operations are also defined pointwise. A subgroup $G$ of $\AAA(X)$ which is also a sublattice is called an $\ell$-permutation group.

A subgroup $G\SB\AAA(X)$ is said to be $m$-transitive if for all 
$$
x_1\le\cdots\le x_m\quad\text{and}\quad y_1\le\cdots\le y_m
$$
in $X$, there exists $\aa\in G$ such that $x_i\aa=y_i$ for all $1\le i\le m$. If $G$ is $m$-transitive, we say that $X$ is $m$-homogeneous. It is clear, that a $2$-homogeneous (doubly homogeneous) linear order does not have gaps.

The result of the following theorem is Lemma~1.10.1 in~\cite{aG81}.

\begin{theorem} \label{homogeneity}
If $G\SB\AAA(X)$ is a doubly transitive $\ell$-permutation group, then it is $m$-transitive for all $m\ge 2$.
\end{theorem}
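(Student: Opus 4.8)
The plan is to prove the statement by induction on $m$, the base case $m=2$ being exactly the hypothesis of double transitivity. Throughout I will use crucially that $G$ is a \emph{sublattice} of $\AAA(X)$: if $s,t\in G$ then the pointwise meet $s\wedge t$ and join $s\vee t$ again lie in $G$. In particular, for any $s\in G$ the ``positive part'' $s\vee\ee\ge\ee$ and ``negative part'' $s\wedge\ee\le\ee$ belong to $G$, and these will be the main device for confining the motion of an automorphism to one side of a prescribed point.

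For the inductive step, assume $G$ is $m$-transitive and let $x_1<\cdots<x_{m+1}$ and $y_1<\cdots<y_{m+1}$ be given. By the induction hypothesis there is $g\in G$ with $x_ig=y_i$ for $1\le i\le m$; writing $z=x_{m+1}g$ we have $z>y_m$, and the task reduces to finding $k\in G$ that fixes $y_1,\dots,y_m$ and sends $z$ to $y_{m+1}$ (then $gk$ is the required element). Thus everything comes down to the following one-point correction: \emph{the pointwise stabilizer in $G$ of the $m$ points $y_1<\cdots<y_m$ acts transitively on the upper ray $(y_m,\to)$.}

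To establish this I would first apply double transitivity to obtain $s\in G$ with $y_ms=y_m$ and $zs=y_{m+1}$, and then replace $s$ by its positive part $t=s\vee\ee$ when a push upward ($z<y_{m+1}$) is needed, and by $s\wedge\ee$ for a push downward. The element $t$ still fixes $y_m$ and still carries $z$ to $y_{m+1}$, but now satisfies $t\ge\ee$; since $t$ fixes $y_m$ and is order-preserving, each point $y_i<y_m$ is displaced only upward while remaining below $y_m$, i.e. $y_i\le y_it<y_m$. It then remains to repair these finitely many lower displacements by an element $c\in G$ that fixes the upper part $[y_m,\to)$ and restores $y_1,\dots,y_{m-1}$, whereupon $tc$ is the desired $k$. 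The repair element $c$ would be produced by the order-reversed version of the same construction (negative parts in place of positive parts) on strictly fewer moving points, so that an inner induction terminates.

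The hard part, and the place where both hypotheses are genuinely used, is precisely this confinement of an automorphism's motion to one side of a prescribed point. The positive/negative part trick confines motion to the set $\{x:xs>x\}$, but not automatically to a half-line; what is really needed is a supply of ``bump'' elements of $G$, that is, nontrivial automorphisms fixing everything outside a prescribed ray or interval. Constructing such elements is the technical heart: one starts from a nontrivial $g\ge\ee$, isolates a single maximal interval of its support, and uses double transitivity to produce disjointly supported conjugates that can be combined by $\wedge$ and $\vee$ so as to cancel the unwanted part while preserving the wanted part. I expect this support-truncation step to be the main obstacle. Note that it fails for merely $2$-transitive groups lacking the lattice hypothesis, which is exactly why the $\ell$-permutation structure is indispensable.
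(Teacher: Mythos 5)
The paper offers no proof of this statement at all---it is quoted directly as Lemma~1.10.1 of Glass~\cite{aG81}---so your attempt can only be measured against the standard argument there. Your skeleton is the right one: induct on $m$, reduce the inductive step to a one-point correction in the pointwise stabilizer of $y_1<\cdots<y_m$, and use the positive part $t=s\vee\ee$ to confine the stray motion to the lower ray. But the argument breaks exactly where you flag ``the technical heart,'' and the route you propose for filling the gap cannot work. You ask for a corrector $c$ fixing the ray $[y_m,\to)$ pointwise, i.e.\ a nonidentity element of $G$ whose support is bounded above. Such elements need not exist: there are \emph{pathologically} $o$-$2$-transitive $\ell$-permutation groups (McCleary; see \cite{aG81}) containing no element other than $\ee$ with support bounded on either side, so no amount of meeting and joining conjugates will ``isolate a supporting interval.'' If instead you weaken the requirement so that $c$ fixes only the finitely many relevant points of the upper ray, the repair problem asks for an element controlling $m-1$ moving points together with $y_m$ and $y_{m+1}$---an instance of $(m+1)$-transitivity again---so your ``inner induction on fewer moving points'' does not actually decrease anything. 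You are caught between a circular argument and an impossible construction.

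The step closes with a single meet rather than a composition, using only the induction hypothesis. Say $z\le y_{m+1}$ (otherwise solve the inverse problem, which is of this form, and invert). Take $t=s\vee\ee$ as you did, so that $t\ge\ee$, $y_mt=y_m$, $zt=y_{m+1}$, and $y_it\ge y_i$ for $i<m$. By $m$-transitivity choose $k\in G$ with $y_ik=y_i$ for $i<m$ and $y_mk=y_{m+1}$, and put $k_+=k\vee\ee\ge\ee$. Then $y_ik_+=y_i$ for $i<m$, $y_mk_+=y_{m+1}$, and $zk_+>y_mk_+=y_{m+1}$ because $z>y_m$. The element $w=t\wedge k_+$ satisfies $y_iw=y_it\wedge y_i=y_i$ for $i<m$, $y_mw=y_m\wedge y_{m+1}=y_m$, and $zw=y_{m+1}\wedge zk_+=y_{m+1}$: the meet with $k_+$ erases the unwanted motion below $y_m$, where $k_+$ is the identity, and is invisible at $y_m$ and $z$, where $k_+$ dominates $t$. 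This is essentially Glass's proof of Lemma~1.10.1, and it requires no bounded-support elements at all.
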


\end{document}